\documentclass[a4paper,twopage,reqno,11pt]{amsart}

\usepackage{pdfsync}
\usepackage{graphicx}
\usepackage{amsmath}
\usepackage{amssymb}
\usepackage{amsfonts}
\usepackage{amsthm}
\usepackage{amstext}
\usepackage{amsbsy}
\usepackage{amsopn}
\usepackage{amscd}
\usepackage{enumerate}
\usepackage{xcolor}
\usepackage{color}
\usepackage{url}
\usepackage[colorlinks]{hyperref}
\usepackage[hyperpageref]{backref}
\usepackage[numbers,sort&compress]{natbib}
\newcounter{rownumber}
\newcommand{\rownumber}{\stepcounter{rownumber}\arabic{rownumber}}
\newcounter{linenumber}
\newcommand{\linenumber}{\stepcounter{linenumber}\arabic{linenumber}}
\usepackage{tabularx}
\usepackage{multirow}
\usepackage{lipsum}
\usepackage{rotating}
\usepackage{longtable}
\usepackage[font=small]{caption}
\usepackage{float}
\usepackage{lscape}
\usepackage{pdflscape}
\usepackage[]{algorithm2e}
\usepackage{color}
\usepackage[colorlinks]{hyperref}
\hypersetup{
	colorlinks=true,%
	citecolor=blue,%
	filecolor=black,%
	linkcolor=red,%
	urlcolor=magenta
}

\newtheorem{theorem}{Theorem}[section]
\newtheorem{corollary}[theorem]{Corollary}

\newtheorem{proposition}[theorem]{Proposition}

\theoremstyle{definition}

\numberwithin{equation}{section}

\newcommand{\GL}{\mathrm{GL}}

\newcommand{\PSL}{\mathrm{L}}

\newcommand{\A}{\mathrm{A}}

\renewcommand{\S}{\mathrm{S}}

\newcommand{\Q}{\mathrm{Q}}
\newcommand{\QD}{\mathrm{QD}}

\newcommand{\D}{\mathrm{D}}

\newcommand{\nr}{\mathrm{nr}}

\newcommand{\HS}{\mathrm{HS}}

\newcommand{\Aut}{\mathrm{Aut}}

\newcommand{\Out}{\mathrm{Out}}

\newcommand{\M}{\mathrm{M}}

\newcommand{\Dmc}{\mathcal{D}}
\newcommand{\Bmc}{\mathcal{B}}

\newcommand{\Pmc}{\mathcal{P}}
\newcommand{\Cmc}{\mathcal{C}}

\renewcommand{\leq}{\leqslant}
\renewcommand{\geq}{\geqslant}

\newcommand{\imod}[1]{\allowbreak\mkern4mu({\operator@font mod}\,\,#1)}

\newcommand{\Bb}[2]{\mathrm{B}_{#2}(#1)}

\begin{document}
	\title[]{Mathieu groups as flag-transitive  automorphism groups of block designs}
	
	\author[F. Bakhtiari]{Fatemeh Bakhtiari}
	\address{Fatemeh Bakhtiari, Department of Mathematics, Faculty of Science, Bu-Ali Sina University, Hamedan, Iran.}
	\email{f.bakhtyari@sci.basu.ac.ir}
	
	\author[A. Daneshkhah]{Ashraf Daneshkhah}
	\address{Ashraf Daneshkhah, Department of Mathematics, Faculty of Science, Bu-Ali Sina University, Hamedan, Iran.}
	\thanks{Corresponding author: Ashraf Daneshkhah}
	\email{adanesh@basu.ac.ir and  daneshkhah.ashraf@gmail.com}

	\subjclass{05B05, 05B25, 20B25, 20D08}%
	\keywords{$2$-design, flag-transitive, sporadic simple group, Mathieu group}
	\date{\today}%

	\begin{abstract}
		In this article, we study $2$-$(v,k,\lambda)$ designs $\Dmc$ admitting a flag-transitive almost simple automorphism group $G$ with socle one of the Mathieu groups. In conclusion, we obtain all such $2$-designs with explicit constructions with appropriate references to known designs. To our knowledge, we obtain $101$ newly-constructed flag-transitive $2$-designs. 
	\end{abstract}
	
	\maketitle
	
	\section{Introduction}\label{sec:intro}
	
	The sporadic simple groups as one of the classes of finite simple groups have always been thought of as an important tool for studying the symmetries of incidence structures, in particular designs, codes and graphs.  Indeed, several sporadic groups arise naturally as automorphism groups of block designs. Among these, the Mathieu groups discovered in the nineteenth century have exceptional symmetric properties. They are in fact unique among finite permutation groups as the only $s$-transitive groups with $s \geq 4$ apart from the symmetric and alternating groups. The Mathieu groups  are intrinsically connected to symmetries of the Witt designs (as the Steiner systems) and these important structures are somehow better understood by these groups, see \cite[Chapter IV]{b:Beth-I}. These groups moreover arose as flag-transitive automorphism groups of $2$-designs in several contexts focusing on studying such designs with restricted conditions on automorphism groups and/or parameter sets, see \cite{a:AD-spor-ft,a:Regueiro-alt-spor,a:Zhou-lam3-spor,a:Zhou-sym-sporadic,a:ADM-PrimLam,a:Zhan-2017-lamcond-spor,a:Ding-2026-bprim-spor}. 
	This motivates us to investigate $2$-designs admitting flag-transitive almost simple automorphism groups $G$ with socle $G_{0}$ one of the  Mathieu groups in general. The case where $G$ is point-primitive and $G_{0}=\M_{11}$ or $\M_{12}$ have completely been determined \cite{a:Tian2020-M11,a:Tian2025-M12}, see also \cite{a:Crnkovic-M11,t:Wirth,a:Lempken-M12-imp}. Therefore, to achieve a complete classification, we revisit these two cases and study the possible point-imprimitive automorphism groups of such $2$-designs. In addition, we investigate the remaining possibilities, where the socle $G_{0}$ is one of the  Mathieu groups $\M_{22}$, $\M_{23}$ and $\M_{24}$, and prove the following result: 
	
	\begin{theorem}\label{thm:main}
		Let $\Dmc$ be a nontrivial $2$-$(v,k,\lambda)$ design admitting a flag-transitive automorphism group $G$ with socle $G_{0}$ one of the Mathieu groups. Then
		\begin{enumerate}[\rm (a)]
			\item if $G_{0}=\M_{11}$ or $\M_{12}$ and $G$ is point-imprimitive, then $G=\M_{12}$ and $\Dmc$ is the unique symmetric design with parameter set $(144, 66, 30)$ as in {\rm \cite{a:Lempken-M12-imp}};
			\item if $G_{0}=\M_{22}$, $\M_{23}$ or $\M_{24}$, then $G$ is point-primitive and $\Dmc$ is (isomorphic to) one of the $2$-designs listed in {\rm Table~\ref{tbl:des}}.
		\end{enumerate} 
	\end{theorem}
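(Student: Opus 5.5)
The classification rests on the two standard consequences of flag-transitivity. Let $\alpha$ be a point and $B$ a block through $\alpha$. Then $G_\alpha$ is transitive on the $r$ blocks through $\alpha$, so $r=|G_\alpha:G_{\alpha,B}|$ divides $|G_\alpha|$. Also $r(k-1)=\lambda(v-1)$, and $r$ divides $\lambda d$ for every nontrivial subdegree $d$ of $G$ on points. Together with $vr=bk$ and $\lambda<r$ (nontriviality), these give the working bound $r^{2}>\lambda v$, which yields $|G_\alpha|^{3}>|G|$ up to the small factor $\lambda$. I would apply these conditions to every transitive action of $G$, covering both the primitive and the imprimitive case, and then pin down the blocks as orbits of suitable subgroups.

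\emph{Step 1 (point-imprimitive case).} Suppose $G$ preserves a nontrivial partition of the points into $d$ classes of size $c$, with $v=cd$. The key tool here is the Davies/Cameron--Praeger style bound: flag-transitivity forces $v\le(k-2)^{2}$ for symmetric designs, and in general $k$ and $r$ are constrained by $r\mid\lambda(c-1)$, since the points outside $\alpha$ in its class form a union of $G_\alpha$-orbits. The plan is as follows.
\begin{itemize}
\item[(i)] Run through the maximal subgroups $M$ of $G$ from the ATLAS, and through the subgroups $H<M$ with $|H|^{3}\lambda\gtrsim|G|$; the points are then $G/H$, and $M$ induces the block system.
\item[(ii)] For each candidate $v=|G:H|$, use the double coset decomposition of $G/H$ (computed in GAP/Magma) to get the subdegrees, and take $\gcd$ of $\lambda(v-1)$ with the subdegrees to bound $r$.
\item[(iii)] Solve for admissible $(v,k,\lambda)$.
\end{itemize}
For $\M_{11}$ and $\M_{12}$ I expect only $H$ of index $144$ in $\M_{12}$ to survive. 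This is $H=\PSL_2(11)$ inside the maximal $\PSL_2(11)$, or more precisely the index-$144$ action via $\M_{11}$ cosets. It gives the $(144,66,30)$ design, whose uniqueness I would take from \cite{a:Lempken-M12-imp} after checking by computer that all $G_{\alpha}$-orbits of size $66$ that form a flag-transitive block yield isomorphic designs. For $\M_{22}$, $\M_{23}$ and $\M_{24}$, the same arithmetic should eliminate every imprimitive action, which proves the primitivity claim in (b).

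\emph{Step 2 (primitive case for $\M_{22},\M_{23},\M_{24}$).} Here $G_\alpha$ is maximal, and the list from the ATLAS is short. The large-subgroup inequality kills the small maximals, leaving:
\begin{itemize}
\item for $\M_{22}$ and $\Aut(\M_{22})$: $\PSL_3(4)$, $2^4{:}\A_6$, $\A_7$, $2^4{:}\S_5$, $2^3{:}\PSL_3(2)$, $\M_{10}$, $\PSL_2(11)$;
\item for $\M_{23}$: $\M_{22}$, $\PSL_3(4){:}2$, $2^4{:}\A_7$, $\A_8$, $\M_{11}$, $2^4{:}(3\times\A_5){:}2$;
\item for $\M_{24}$: $\M_{23}$, $\M_{22}{:}2$, $2^4{:}\A_8$, $\M_{12}{:}2$, $2^6{:}3.\S_6$, $\PSL_3(4){:}\S_3$, $2^6{:}(\PSL_3(2)\times\S_3)$, and possibly $\PSL_2(23)$.
\end{itemize}
For each of these I would first list the subdegrees and apply $r\mid\gcd(\lambda(v-1),\lambda d_i)$ to get the possible $r$. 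Every divisor $r$ of $|G_\alpha|$ that is compatible with this is then realised as $|G_\alpha:K|$ for some $K\le G_\alpha$. For each such $K$, I take the base block $B$ to be a $G_{B}$-orbit with $G_B\ge K$ acting transitively on $B$. Concretely, for each subgroup $K$ of $G_\alpha$ of the right index, I would consider the orbits of $N_G(K)$-overgroups containing $\alpha$, and test whether $B^G$ is a $2$-design. This is a computation in Magma. The resulting designs, identified up to isomorphism by invariants such as intersection numbers, fill Table~\ref{tbl:des}, and I would cross-reference them with the Witt designs and the known designs from \cite{a:AD-spor-ft,a:Zhou-sym-sporadic} and related work.

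\emph{Main obstacle.} The hardest part is the exhaustiveness of the block search in the large primitive actions of $\M_{24}$, where $G_\alpha$ has many conjugacy classes of subgroups of index $r$; $2^6{:}3.\S_6$ and $2^4{:}\A_8$ are the worst cases. To keep this under control I would first bound $k$ via $k\mid\gcd(|G_B|,\ldots)$ and $k\le r\lambda^{-1}(\ldots)$. I would then enumerate the candidate block stabilisers $G_B$ among the subgroups of $G$ of index $b=vr/k$, since these are far fewer than the subgroups of $G_\alpha$, and take $B$ as a union of $G_B$-orbits meeting $\alpha$'s orbit transitively. A secondary difficulty is proving that designs with equal parameters are non-isomorphic. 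For this I would use block intersection sizes and the full automorphism group computed by Magma.
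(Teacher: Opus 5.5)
Your overall route is the paper's. It is an exhaustive computer search driven by the arithmetic conditions and the subdegree condition $r\mid\lambda d$. Candidate block stabilisers $G_B$ of index $b$ with an orbit of length $k$ are enumerated, then tested by $|B^G|=b$, a $2$-design check and isomorphism testing. Note that flag-transitivity forces $B$ to be a single $G_B$-orbit, not a union of orbits.

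For point-imprimitive actions you use $r\mid\lambda(c-1)$ in place of the Devillers--Praeger Lemma~5 quoted in the paper. This elementary test does suffice for every imprimitive candidate the paper meets outside $\M_{12}$. The ratio $r/\gcd(r,\lambda)$ equals $3$ for $\M_{11}$ ($v=22$), $9$ for $\M_{22}{:}2$ ($v=154$ and $352$), and $30$ and $64$ for $\M_{23}$ ($v=1771$ and $5313$). In each case no divisor $c$ of $v$ with $1<c<v$ has $c-1$ divisible by that number.

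The gap is in the one case that survives, and your Step~1 mishandles it.

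\emph{Which subgroup.} The relevant point stabiliser in $\M_{12}$ is the non-maximal class $\PSL_2(11)=\M_{11}\cap\M_{11}'$. It is the stabiliser of a point in each of the two $12$-point representations, so the $144$ points are pairs and $c=d=12$. It is not a subgroup of the maximal $\PSL_2(11)$, which gives a point-primitive action.

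\emph{Why arithmetic does not finish it.} The subdegrees of this action are $1,11,11,55,66$, so $r\mid\lambda(c-1)=11\lambda$. This and all your other arithmetic conditions hold not only for $(144,66,30)$ but also for six non-symmetric sets: $(144,396,110,40,30)$, $(144,792,220,40,60)$, $(144,880,165,27,30)$, $(144,1760,330,27,60)$, $(144,2376,660,40,180)$ and $(144,3520,660,27,120)$. The Devillers--Praeger conditions do not kill them either. Your plan only examines orbits of size $66$, so as written it never excludes these, and the conclusion of (a) does not follow.

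\emph{What is needed.} You must run your Step~2 block search here as well. For each candidate $G_B$ of index $b$, check for orbits of length $k$, and then check whether $B^G$ is a $2$-design. The paper finds the candidates $2\times\S_5$, $\S_5$, $2\times\A_5$, $3^{2+1}{:}2^2$, $3^2{:}6$, $2\times(5{:}4)$ and $3^2{:}3$. In each case either no orbit of length $k$ exists, or the orbit fails to give a $2$-design.
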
 
	
	As noted above, the $2$-designs admitting flag-transitive and point-primitive almost simple automorphism groups with socle $G_{0}=\M_{11}$ or $\M_{12}$ have already been studied. This together with Theorem \ref{thm:main} presents a complete classification of $2$-designs admitting flag-transitive automorphism groups with socle one of the Mathieu groups, and in conclusion the  number of such designs is given in Table~ \ref{tbl:statistics}. It can obviously be seen that $\Dmc$ admits point-primitive automorphism groups except for the one where $G=\M_{12}$ and $\Dmc$ is the unique symmetric design on $144$ points with block size $66$ which, to our knowledge, is first appeared in \cite{a:Lempken-M12-imp} and reconstructed in \cite{a:AD-spor-ft}.
	
	\begin{corollary}\label{cor:main}
		Let $\Dmc$ be a nontrivial $2$-$(v,k,\lambda)$ design admitting a flag-transitive automorphism group $G$ with socle one of the Mathieu groups. Then one of the  following holds:
		\begin{enumerate}[\rm (a)]
			\item $G$ is point-imprimitive and $\Dmc$ is the unique symmetric design with parameter set $(144, 66, 30)$ as in {\rm \cite{a:Lempken-M12-imp}};
			\item $G$ is point-primitive and $\Dmc$ is known ({\rm Table~\ref{tbl:des}} apart from the design in line $26$).
		\end{enumerate}
	\end{corollary}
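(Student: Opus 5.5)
The corollary is assembled from Theorem~\ref{thm:main} and the earlier classifications of the point-primitive cases with socle $\M_{11}$ or $\M_{12}$. I would split according to the socle $G_{0}$ and, for $G_{0}\in\{\M_{11},\M_{12}\}$, also according to whether $G$ is point-primitive.

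\emph{Point-imprimitive case.} Suppose $G$ is point-imprimitive. If $G_{0}=\M_{22}$, $\M_{23}$ or $\M_{24}$, this cannot happen, because Theorem~\ref{thm:main}(b) says $G$ is point-primitive. So $G_{0}=\M_{11}$ or $\M_{12}$. Then Theorem~\ref{thm:main}(a) gives $G=\M_{12}$ and shows that $\Dmc$ is the unique symmetric $(144,66,30)$ design of \cite{a:Lempken-M12-imp}. This is conclusion (a).

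\emph{Point-primitive case.} Suppose $G$ is point-primitive.
\begin{enumerate}[\rm (i)]
\item If $G_{0}=\M_{22}$, $\M_{23}$ or $\M_{24}$, Theorem~\ref{thm:main}(b) places $\Dmc$ in Table~\ref{tbl:des}.
\item If $G_{0}=\M_{11}$ or $\M_{12}$, I invoke the classifications \cite{a:Tian2020-M11,a:Tian2025-M12}. I would then check that every design they produce is recorded in Table~\ref{tbl:des}; presumably the table is compiled to include these designs.
\end{enumerate}

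\emph{Main obstacle.} The hardest point is the word ``known'' together with the exclusion of line $26$. For each line of Table~\ref{tbl:des} except line $26$, I need to identify the design with one already in the literature: a Witt design, a design from \cite{a:Tian2020-M11,a:Tian2025-M12,a:Crnkovic-M11}, or one from the sporadic flag-transitive literature. This is a line-by-line comparison of parameters and of point and block stabilisers, using the references attached to the table. It also has to be compatible with the earlier claim of $101$ newly constructed designs. I read ``known'' in (b) as meaning ``explicitly determined in Table~\ref{tbl:des}''. Under that reading the assertion needs only a one-line remark: line $26$ is the $(144,66,30)$ design, which is point-imprimitive under $\M_{12}$ and so belongs to case (a). It may still admit a point-primitive group, as the paper's introductory remark suggests, but that group is not a case to be handled under (b).

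Finally, the cases $G$ point-primitive and $G$ point-imprimitive are exhaustive. Hence (a) or (b) holds.
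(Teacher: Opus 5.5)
Your proposal is correct and follows the paper's own route: you read the corollary off from Theorem~\ref{thm:main} together with the earlier point-primitive classifications for socle $\M_{11}$ and $\M_{12}$, and you exclude line $26$ from (b) because it is exactly the point-imprimitive case (a). The check you leave as ``presumably'' is already supplied by Propositions~\ref{prop:m11} and~\ref{prop:m12}, which place every point-primitive design with socle $\M_{11}$ or $\M_{12}$ (including the symmetric ones from \cite{a:AD-spor-ft,a:Zhou-sym-sporadic}) in lines $1$--$28$ of Table~\ref{tbl:des}; and reading ``known'' as ``listed in that table'' is the interpretation consistent with the paper's claim of $101$ new designs.
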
 
	
	In order to prove Theorem \ref{thm:main}, we apply the method described in Section \ref{sec:method}. The modified versions of this method have been previously used in several papers of the second author and experts in the field, and in general, it can be applied to construct and to classify $2$-designs with flag-transitive automorphism groups. The proof relies on fundamental information of subgroup structures of the Mathieu groups and their permutation representations which can be read off from \cite{a:Leemans2015-atlas,b:Atlas}. We use the  computer software GAP \cite{GAP4} for computation. In particular, we use software packages ``\verb|AtlasRep|'' and ``\verb|Design|'' in GAP. 
	
	The detailed information on the $2$-designs obtained in Theorem \ref{thm:main} and Corollary~\ref{cor:main} is provided in Section \ref{sec:examples}. For the sake of completeness, in Table~\ref{tbl:des}, we list all $2$-designs admitting almost simple groups with socle one of the Mathieu groups. A base block of each of these designs is given in Table~\ref{tbl:base}. We note here that the known designs in the literature, for example \cite{a:Tian2020-M11,a:Tian2025-M12,a:Zhan-2017-lamcond-spor,a:Ding-2026-bprim-spor}, may be introduced with different base blocks. In this paper, to our knowledge, we obtain $101$ newly constructed flag-transitive non-isomorphic $2$-designs with block-imprimitive automorphism groups. 
	
	\setcounter{rownumber}{0}
	\addtolength{\tabcolsep}{-1.5pt} 
	\begin{table} 
		\caption{The number of non-trivial $2$-designs admitting flag-transitive automorphism groups $G$ with socle one of the Mathieu groups.} \label{tbl:statistics}
		\centering
		\scriptsize
		\begin{tabular}{lccccccccccc}
			\noalign{\smallskip}\hline\noalign{\smallskip}
			$G$ 
			& & Total 
			& & symmetric & non-symmetric 
			& & point-primitive & point-imprimitive 
			& & block-primitive & block-imprimitive\\
			\noalign{\smallskip}\cline{1-1}\cline{3-3}\cline{5-6}\cline{8-9}\cline{11-12}\noalign{\smallskip}
			
			$\M_{11}$ & & $16$ 
			& & $0$ & $16$ 
			& & $16$ & $0$ 
			& & $8$ & $8$ \\
			$\M_{12}$ & & $10$ 
			& & $2$ & $8$ 
			& & $9$ & $1$ 
			& & $6$ & $4$ \\
			$\M_{12}{:}2$ & & $2$ 
			& & $2$ & $0$ 
			& & $2$ & $0$ 
			& & $2$ & $0$ \\
			$\M_{22}$ & & $50$ 
			& & $1$ & $49$ 
			& & $50$ & $0$ 
			& & $11$ & $39$ \\
			$\M_{22}{:}2$ & & $35$ 
			& & $0$ & $35$ 
			& & $35$ & $0$ 
			& & $7$ & $28$  \\
			$\M_{23}$ & & $29$ 
			& & $0$ & $29$ 
			& & $29$ & $0$ 
			& & $9$ & $20$ \\
			$\M_{24}$ & & $42$ 
			& & $0$ & $42$ 
			& & $42$ & $0$ 
			& & $5$ & $37$ \\
			\noalign{\smallskip}\hline\noalign{\smallskip} 
		\end{tabular}
	\end{table}
	\addtolength{\tabcolsep}{1.5pt} 
	
	\subsection{Definitions and notation}\label{sec:defn}	
	All groups and incidence structures in this paper are finite.  Symmetric and alternating groups on $n$ letters are denoted by $\S_{n}$ and $\A_{n}$, respectively. We write ``$n$'' for the cyclic group of order $n$. We use ``$[n]$'' to denote a group of order $n$. A finite simple group is (isomorphic to) a cyclic group of prime order, an alternating group $\A_{n}$ for $n\geq 5$, a simple group of Lie type or a sporadic simple group, see \cite{b:Atlas} or \cite[Tables~5.1A-C]{b:KL-90}, and we use the standard notation as in \cite{b:Atlas} for finite simple groups and their subgroups. A $2$-$(v,k,\lambda)$ design $\Dmc$ is an incidence structure $(\Pmc,\Bmc)$ with a set $\Pmc$ of $v$ points and a set $\Bmc$ of blocks such that each block is a $k$-subset of $\Pmc$ and each pair of distinct points is contained in $\lambda$ blocks. We say $\Dmc$ is nontrivial if $2 < k < v-1$, and symmetric if $v = b$, where $b$ is the number of blocks of $\Dmc$.  A design $\Dmc$ is said to be \emph{complete} (or full) if $b=\binom{v}{k}$.  Each point of $\Dmc$ is contained in exactly $r=bk/v$ blocks which is called the \emph{replication number} of $\Dmc$. An \emph{automorphism} of a $2$-design $\Dmc$ is a permutation of the points permuting the blocks. The full automorphism group $\Aut(\Dmc)$ of $\Dmc$ is the group consisting of all automorphisms of $\Dmc$. A \emph{flag} of $\Dmc$ is a point-block pair $(\alpha, B)$ such that $\alpha \in B$. For $G\leq \Aut(\Dmc)$, the group $G$ is called \emph{flag-transitive} if $G$ acts transitively on the set of flags. The group $G$ is said to be \emph{point-primitive} or \emph{block-primitive} if $G$ acts primitively on $\Pmc$ or $\Bmc$, respectively.  
	Further notation and definitions in both design theory and group theory are standard and can be found, for example, in \cite{b:Atlas,b:Beth-I}.

	\section{Methodology}\label{sec:method}
	
	In this section, we describe the method that we use in this paper to prove Theorem \ref{thm:main}. We recall here that we use GAP \cite{GAP4}, in particular, we use stored groups library and software packages ``\verb|AtlasRep|'' and ``\verb|Design|''. 
	
	Suppose now that $\Dmc = (\Pmc, \Bmc)$ is a nontrivial non-symmetric $2$-$(v,k,\lambda)$ design. 
	Then 
	by Fisher's inequality \cite{b:Beth-I}, we  have that $v<b$ or $k<r$.
	Let $G$ be a flag-transitive automorphism group of  $\Dmc$, and let $(\alpha,B)$ be a flag of $\Dmc$. Then we consider the point-stabiliser $H:=G_{\alpha}$ and  the block-stabiliser $K:=G_{B}$. In what follows, we frequently use the following facts: 
	\begin{enumerate}[\rm \quad (i)]
		\item $r(k-1)=\lambda(v-1)$;
		\item $rv=bk$;
		\item $\lambda v<r^2$;
		\item $v=|G:H|$, $b=|G:K|$, $r=|H:L|$ and $k=|K:L|$, where $L:=G_{\alpha,B}=H_{B}=K_{\alpha}$;
		\item $r\mid |H|$ and $|G|<|H|^{3}$;
		\item $r\mid \lambda e$, for all nontrivial subdegrees $e$ of $G$;
		\item $K$ is transitive on $B$, and so $B$ is a $K$-orbit.  
	\end{enumerate} 
	We note here that the properties (iv)-(vii) follow from the fact that $G$ is flag-transitive, see for example \cite{a:Davies-87} and \cite[Lemma~2.1]{a:A-Exp-lam2}.
	
	Let now $G_{0}$ be one of the Mathieu groups, and let $G$ be an almost simple group with socle $G_{0}$. We can obtain all the subgroups of $G$ up to conjugation by GAP \cite{GAP4}. The list of these subgroups can also be read off from  \cite{a:Leemans2015-atlas}. For each group, this gives the possibilities for $H$ and $K$, and so by inspecting the subgroups of $G$ and using (i)-(v), we can find the possible tuples $[ (v,b,r,k,\lambda) , H, K ]$ with noting that $r = z/v$ and $k=z/b$, where $z$ is the index of a subgroup of $G$. 
	Now, for each tuple $ [ (v,b,r,k,\lambda) , H, K ]$, we consider the permutation representation of $G$ on the set of right cosets of $H$ in $G$, and view $G$ as the image of this representation. We first 
	check the subdegree property (vi), and for those possibilities satisfying this condition, we view $H$ and $K$ as images of this permutation representation (on the set of right cosets of $H$ in $G$). 
	This guarantees both $H$ and $K$ as permutation groups on $v$ points. We then check if property (vii) hold. If all the necessary conditions (i)-(vii) hold for a tuple $[ (v,b,r,k,\lambda) , H, K ]$, and we obtain a $K$-orbit $B$, then we check if $|B^{G}|=b$, and if this extra condition holds, we use the software package ``\verb*|Design|'' in GAP \cite{GAP4} to see if there exists a design with this parameter set. 
	In the case where $G$ is imprimitive on $v$ points and leaves invariant a nontrivial point-partition $\Cmc$ into $d$ parts of size $c$, we can employ \cite[Theorem 1 and Lemma 5]{a:DP-2021-Imp} to find the parameters $c$ and $d$. Indeed, the nonempty intersections $B\cap \Delta$ have a constant size $\ell$ (for $B\in \Bmc$ and $\Delta \in \Cmc$), and the integer $x=k-1-d(\ell-1)$ is positive, and the parameters $c$, $d$, $\ell$ and $x$ satisfy the inequalities and divisibility conditions listed in \cite[Lemma 5]{a:DP-2021-Imp}. In particular, $k\leq 2{\lambda}^2(\lambda-1)$ and $v\leq{(2{\lambda}^2(\lambda-1)-2)}^2$. We note here that some possibilities can be eliminated in two or more ways. For example, some point-imprimitive cases can also be eliminated by subdegree argument. In the proof of our statements in Section \ref{sec:proof}, we prefer to rule out these cases by taking imprimitive actions.    
	
	\section{Proof of the main result}\label{sec:proof}
	
	In this section, we prove Theorem \ref{thm:main}. In order to prove this result, we frequently use the method described in Section \ref{sec:method}, and we consider each group separately. 
	
	\begin{proposition}\label{prop:m11}
		Let $\Dmc$ be a non-trivial $2$-$(v,k,\lambda)$ design admitting a flag-transitive automorphism group $G$ with socle $G_{0}=\M_{11}$. Then $G$ is point-primitive and $(\Dmc,G)$ is as in one of the lines $1$-$16$ of {\rm Table \ref{tbl:des}}.
	\end{proposition}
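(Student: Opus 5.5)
Since $\Out(\M_{11})=1$, we have $G=G_{0}=\M_{11}$, of order $7920=2^4\cdot 3^2\cdot 5\cdot 11$. We apply the method of Section~\ref{sec:method}. The point-primitive case is already settled in \cite{a:Tian2020-M11}, and we quote it to obtain lines $1$--$16$ of Table~\ref{tbl:des}. The real work is to show that no flag-transitive point-imprimitive design with $G=\M_{11}$ exists.

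For the imprimitive case we take $H=G_{\alpha}$ to be a non-maximal subgroup of $\M_{11}$. Condition (v) gives $|H|^{3}>7920$, so $|H|\geq 20$. We then run through the conjugacy classes of subgroups of $\M_{11}$, available from GAP or \cite{a:Leemans2015-atlas}, with $|H|\geq 20$ and $H$ not maximal. For each such $H$ we take an overgroup $H<M<G$; this gives a $G$-invariant partition into $d=|G:M|$ classes of size $c=|M:H|$, where $v=cd$. Next we use \cite[Theorem~1 and Lemma~5]{a:DP-2021-Imp}. These give a constant intersection size $\ell$ and an integer $x=k-1-d(\ell-1)>0$, and they impose the bounds $k\leq 2\lambda^{2}(\lambda-1)$ and $v\leq (2\lambda^{2}(\lambda-1)-2)^{2}$ together with the divisibility conditions of that lemma. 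We combine these with (i)--(iii), with $r\mid |H|$, and with $b=|G:K|$ for some subgroup $K$ satisfying $k=|K:L|$. Here $r=|H:L|$ must divide $|H|$, and $\lambda=r(k-1)/(v-1)$ must be an integer. Because $|H|$ is small, $r$ is bounded, and hence so are $\lambda$ and $k$. I expect these arithmetic conditions to kill almost every tuple $[(v,b,r,k,\lambda),H,K]$.

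For any tuple that survives, we build the coset action of $G$ on $G/H$ in GAP and proceed as follows:
\begin{enumerate}[\rm (1)]
\item check the subdegree condition (vi), namely $r\mid \lambda e$ for every nontrivial subdegree $e$;
\item enumerate the candidate subgroups $K$ of index $b$ and the $K$-orbits $B$ of length $k$;
\item test whether $|B^{G}|=b$ and whether $B^{G}$ is a $2$-design.
\end{enumerate}
We expect every case to fail at one of these steps, which proves that $G$ is point-primitive.

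The main obstacle is keeping the enumeration over non-maximal subgroups complete and reliable. The subgroups of order at least $20$ give quite a few values of $v$, namely $v\leq 396$, and the imprimitivity systems need to be handled carefully. If the arithmetic filters are too weak for some $v$, for example $v=22$, $55$, $66$ or $110$, I would first apply the subdegree condition (vi) in the coset action. This usually forces $r\mid\gcd(\lambda e_i)$ and gives a contradiction quickly. Only as a last resort would I use a direct orbit search over $K$.
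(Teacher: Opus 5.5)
Your plan is essentially the paper's proof. You quote \cite{a:Tian2020-M11} for the point-primitive case and eliminate the point-imprimitive case using the conditions of \cite[Lemma~5]{a:DP-2021-Imp}. In fact the filters (i)--(v) leave only $H=\A_{6}<\M_{10}$ with $v=22$ and $(c,d)=(2,11)$; your $v=55,66$ cannot occur imprimitively, because the only subgroups of orders $144$ and $120$ are maximal. There, $\lambda(c-1)=r(\ell-1)$ together with $r(k-1)=\lambda(v-1)$ gives $k-1=21(\ell-1)\geq 21$, so $k\geq v$ and your computational steps (1)--(3) are never needed.
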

	\begin{proof}
		Note that $G=G_{0}$ as $|\Out(G_{0})|=1$. We first find the subgroups of $G$ by GAP \cite{GAP4}, and in conclusion, $G$ has $39$ non-conjugate subgroups, five of which are maximal in $G$. 
		By \cite{a:AD-spor-ft}, we obtain no flag-transitive symmetric design. Therefore, assuming $\Dmc$ is non-symmetric, 
		we obtain $61$ parameters $(v,b,r,k,\lambda)$, and taking into account of the possibilities for candidates of point-stabiliser $H$, we have $61$ possibilities for $[(v, b, r, k, \lambda), H]$ in which $H$ is one of the  subgroups
		$3^2{:}\QD_{16}=\M_{9}{:}2$, $\A_{6}$, $\M_{10}=\A_{6}\cdot2$ and $\PSL_{2}(11)$. Among these possibilities, there are $6$ cases with non-maximal point-stabiliser in $G$ which may lead to point-imprimitive designs, and the remaining $55$ cases have maximal point-stabiliser.
		
		We first deal with the case where $G$ is point-imprimitive with a nontrivial point-partition $\Cmc$ into $d$ parts of size $c$ and $|B\cap \Delta|=\ell$ for some $B\in \Bmc$ and $\Delta \in \Cmc$. In this case, $H=\A_{6}$ has index $v=22$ in $G$, and we have one of the following parameter sets: 
		\begin{align*}
			&(22 , 66 , 45 , 15 , 30),  
			(22 , 132 , 90 , 15 , 60),
			(22 , 165 , 60 , 8 , 20), \\ 
			&(22 , 330 , 120 , 8 , 40), 
			(22 , 495 , 180 , 8 , 60),
			(22 , 990 , 360 , 8 , 120).	
		\end{align*}
		But none of these parameter sets leads to parameters $(c,d,\ell)$ satisfying the inequalities and divisibility conditions in \cite[Lemma 5]{a:DP-2021-Imp}. Therefore, $G$ is a point-primitive automorphism group of $\Dmc$, and hence the result follows from \cite[Theorem~1.1]{a:Tian2020-M11}.  
	\end{proof}
	
	\begin{proposition}\label{prop:m12}
		Let $\Dmc$ be a non-trivial $2$-$(v,k,\lambda)$ design with flag-transitive automorphism group $G$ with socle $G_{0}=\M_{12}$. Then $\Dmc$ is as in one of the lines $17$-$28$ of {\rm Table \ref{tbl:des}}.
	\end{proposition}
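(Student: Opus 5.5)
We follow the proof of Proposition~\ref{prop:m11} and apply the method of Section~\ref{sec:method}. Since $|\Out(\M_{12})|=2$, there are two groups: $G=\M_{12}$ and $G=\M_{12}{:}2$. For each $G$ we use GAP to list the conjugacy classes of subgroups, as in \cite{a:Leemans2015-atlas}.

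\textbf{Symmetric case.} By \cite{a:AD-spor-ft}, the flag-transitive symmetric designs with socle $\M_{12}$ are already known. They are the two symmetric designs for $\M_{12}$, including the point-imprimitive $(144,66,30)$ design of \cite{a:Lempken-M12-imp} with $H=\PSL_{2}(11)$. There are also the two designs for $\M_{12}{:}2$ recorded in Table~\ref{tbl:statistics}. We quote this result and from then on assume that $\Dmc$ is non-symmetric.

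\textbf{Non-symmetric case: enumerating candidates.} We search for tuples $(v,b,r,k,\lambda)$, with $v=|G:H|$ and $b=|G:K|$, that satisfy conditions (i)--(v). We use $|G|<|H|^3$ and $r\mid |H|$ to cut the list of candidate point-stabilisers $H$ down to large subgroups. This leaves the maximal subgroups $\M_{11}$ (two classes in $\M_{12}$), $\M_{10}{:}2$, $\PSL_{2}(11)$, $\M_{9}{:}\S_{3}$, $2\times\S_{5}$ and so on, together with the corresponding subgroups of $\M_{12}{:}2$ (including the novelty maximals). It also leaves a few non-maximal subgroups such as $\M_{10}$, $\PSL_{2}(11)$ inside $\M_{11}$, and $\A_{6}$-type subgroups.

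\textbf{Point-primitive cases.} When $H$ is maximal, $G$ is point-primitive. For these tuples we invoke \cite{a:Tian2025-M12}, which classifies flag-transitive point-primitive designs with socle $\M_{12}$. As a safeguard we also rerun the checks: the subdegree condition (vi), transitivity of $K$ on a candidate block (vii), $|B^{G}|=b$, and verification with the \verb|Design| package. This recovers the point-primitive lines of Table~\ref{tbl:des}.

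\textbf{Point-imprimitive cases.} When $H$ is non-maximal, $G$ preserves a nontrivial partition into $d$ classes of size $c$ with $cd=v$, where $c=|M:H|$ for some overgroup $M>H$. For each such tuple we compute $\ell$ and $x=k-1-d(\ell-1)$. We then test the conditions of \cite[Theorem~1 and Lemma~5]{a:DP-2021-Imp}, in particular $k\le 2\lambda^2(\lambda-1)$ and $v\le(2\lambda^2(\lambda-1)-2)^2$. Most tuples with small $v$ fail immediately, as happened for $v=22$ in the $\M_{11}$ case. Surviving tuples we eliminate by the subdegree condition (vi) in the action on cosets of $H$, or by checking directly in GAP that no union of $K$-orbits of size $k$ forms a $2$-design with $b$ blocks. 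We expect the only survivor to be the symmetric $(144,66,30)$ design, where $H=\PSL_{2}(11)<\M_{11}$ and $G=\M_{12}$.

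\textbf{Main obstacle.} The hardest part will be this final elimination of imprimitive candidates with large $v$, for example coset spaces of order several hundred or more. There the Lemma~5 bounds are weak when $\lambda$ is large, so the subdegree computation and an explicit orbit-by-orbit search over $K$-orbits become necessary. In these cases we would first try (vi) with the full list of subdegrees of $G$ on the cosets of $H$, and resort to explicit design checking only if that fails. Finally, we sort the surviving designs up to isomorphism to produce lines $17$--$28$ of Table~\ref{tbl:des}.
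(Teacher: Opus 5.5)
Your plan matches the paper's proof. You split into $G=\M_{12}$ and $\M_{12}{:}2$, quote \cite{a:AD-spor-ft,a:Zhou-sym-sporadic} for the symmetric case, and enumerate tuples via (i)--(v). You then pass the maximal-$H$ cases to \cite{a:Tian2025-M12} and remove the non-maximal-$H$ cases with \cite[Lemma 5]{a:DP-2021-Imp} followed by $K$-orbit and \texttt{Design} computations. In the paper, the only imprimitive survivors of Lemma~5 are six $v=144$ tuples with $H=\PSL_2(11)<\M_{11}$ and $c=d=12$. For these, the subdegrees of the action on cosets of $H$ (by my own computation $1,11,11,55,66$, not stated in the paper) satisfy condition (vi), so it is the single-$K$-orbit check that does the work. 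Also, since you have already set the symmetric case aside, you should expect no survivor there, not the $(144,66,30)$ design.
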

	\begin{proof}
		
		The case where $\Dmc$ is symmetric has been studied in \cite{a:AD-spor-ft,a:Zhou-sym-sporadic}, and hence we obtain one of the designs in line $25$ or $26$. The design in line $25$ is point-primitive and the  design in line $26$ is point-imprimitive. We now assume that $\Dmc$ is non-symmetric. Since $|\Out(G_0)|=2$, we have $G=G_0=\M_{12}$ or $G=\M_{12}{:}2$. In what follows, we discuss each case separately.
		
		We first suppose that $G=\M_{12}$. By GAP \cite{GAP4}, the group $G$ has $147$ non-conjugate subgroups, $11$ of which are maximal in $G$.
		Using the facts (i)-(v) in Section \ref{sec:method} and the method described there, we obtain $37$ parameter sets $(v, b, r, k, \lambda )$.
		Then by considering  the candidates for point-stabiliser $H$, we obtain $74$ possibilities for $[(v, b, r, k, \lambda), H]$ where $H$ is one of the subgroups 
		$\M_{10}{:}2=\A_{6}{:}2^{2}$ (two non-conjugate classes), $\M_{11}$ (two non-conjugate classes) and $\PSL_{2}(11)$ (two non-conjugate classes). Among these possibilities there are $68$ tuples with $H$ maximal in $G$ and $6$ tuples with $H$ non-maximal in $G$.  In the former case where $H$ is a maximal subgroup of $G$, the group $G$ is point-primitive, and all these possibilities have been treated in \cite{a:Tian2025-M12}, and in conclusion, we obtain one of the designs recorded as in lines $17$-$26$ of Table \ref{tbl:des}.

		In the latter case where $G$ is point-imprimitive, $H=\PSL_2(11)$ is contained in two non-conjugate maximal subgroups being isomorphic to $\M_{11}$. If $G$ leaves a nontrivial point-partition $\Cmc$ into $d$ parts of size $c$ and $|B\cap \Delta|=\ell$ for some $B\in \Bmc$ and $\Delta \in \Cmc$, then by \cite[Lemma 5]{a:DP-2021-Imp}, we obtain 
		the following possibilities for $[(v, b, r, k, \lambda), c, d, \ell]$:
		\begin{align*}
			&[ ( 144, 396, 110, 40, 30 ), 12, 12, 4],
			[ (144, 792, 220, 40, 60) , 12, 12, 4],\\
			&[ (144, 880, 165, 27, 30), 12, 12, 3],
			[ (144, 1760, 330, 27, 60), 12, 12, 3],\\
			&[ (144, 2376, 660, 40, 180), 12, 12, 4],
			[ (144, 3520, 660, 27, 120), 12, 12, 3].
		\end{align*}
		The candidates for $K$ of all these possibilities are recorded in Tables~\ref{tbl:m12-imp-norb} and \ref{tbl:m12-imp-orb}. 
		\setcounter{rownumber}{0}
		\begin{table}
			\caption{The possible parameters for $G=\M_{12}$ and non-maximal point-stabiliser $H=\PSL_2(11)$.} \label{tbl:m12-imp-norb}
			\centering
			\scriptsize
			\begin{tabular}{llllllllll}
				\noalign{\smallskip}\hline\noalign{\smallskip} 
				Line & $v$ & $b$ & $r$ & $k$ & $\lambda$ & $K$ & nr(K) & $K$-orbit   structure  \\
				\noalign{\smallskip}\hline\noalign{\smallskip} 
				
				\rownumber & $144$ & $396$ & $110$ & $40$ & $30$ & 
				$2{\times} \S_5$ & $18$ & $24^{1} 120^{1}$ &  \\ 
				\rownumber & $144$ & $792$ & $220$ & $40$ & $60$ & 
				$\S_5$ & $25$ & $2^{1} 10^{2} 12^{1} 20^{1} 30^{1} 60^{1}$ &  \\
				\rownumber & $144$ & $792$ & $220$ & $40$ & $60$ & 
				$\S_5$ & $26$ & $2^{1} 10^{2} 12^{1} 20^{1} 30^{1} 60^{1}$ &  \\
				\rownumber & $144$ & $792$ & $220$ & $40$ & $60$ & 
				$\S_5$ & $27$ & $12^{2} 60^{2}$ &  \\
				\rownumber & $144$ & $792$ & $220$ & $40$ & $60$ & 
				$\S_5$ & $28$ & $12^{2} 60^{2}$ &  \\
				\rownumber & $144$ & $792$ & $220$ & $40$ & $60$ & 
				$2{\times} \A_5$ & $29$ & $12^{2} 60^{2}$ &  \\
				\noalign{\smallskip}\hline\noalign{\smallskip} 
			\end{tabular}
		\end{table}
		If $K$ is one of the subgroups in Table~\ref{tbl:m12-imp-norb}, by GAP \cite{GAP4}, we obtain the $K$-orbit structure as in the last column of this table, and in conclusion, we observe that	$K$ has no orbit of length $k=|B|$, which is a contradiction. 
		For the remaining possibilities listed in Table~\ref{tbl:m12-imp-orb}, the subgroup $K$ has an orbit of length $k$. Such orbits are recorded in the last column of that table. Since $G$ is flag-transitive, the $G$-orbit $B^{G}$, where $B$ is a $K$-orbit as in Table~\ref{tbl:m12-imp-orb}, should form a block set of $\Dmc$, but using  GAP package ``\verb*|Design|'' \cite{DESIGN}, none of these orbits gives rise to a $2$-design.  
		
		Suppose now $G=\M_{12}{:}2$. By applying the  method described in Section \ref{sec:method}, we obtain $12$ possible parameter sets $(v, b, r, k, \lambda)$, and by taking into account of the candidates for the point-stabiliser
		$H$, we have $22$ possibilities for $[(v, b, r, k, \lambda), H]$, in all of which, $H$ is the maximal subgroup $(2^2{\times}\A_{5}){:}2$ or $\PSL_{2}(11){:}2$ (two non-conjugate classes), that is to say, $G$ is point-primitive. Then by  {\rm \cite[Theorem 1]{a:Tian2025-M12}} and \cite{a:Zhou-sym-sporadic}, we obtain one of the  possibilities recorded in lines $27$ and $28$ of  Table~\ref{tbl:des}. 
	\end{proof}
	
	\addtolength{\tabcolsep}{-2pt} 
	\setcounter{rownumber}{0}
	\begin{table}[h]
		\centering
		\scriptsize
		\caption{ The possible parameters for $G=\M_{12}$ and non-maximal point-stabiliser $H=\PSL_2(11)$.} \label{tbl:m12-imp-orb}
		\begin{tabular}{lllllllllp{75mm}}
			\noalign{\smallskip}\hline\noalign{\smallskip}
			Line & 
			$v$ & 
			$b$ & 
			$r$ & 
			$k$ & 
			$\lambda$ & 
			$K$ & 
			nr(K) &
			Orbit structure & 
			$K$-orbit \\
			\noalign{\smallskip}\hline\noalign{\smallskip}
			
			\rownumber & $144$ & $880$ & $165$ & $27$ & $30$ & 
			$3^{2+1}{:}2^{2}$ & $30$ & $9^{3} 18^{2} 27^{1} 54^{1}$ & $\{  4$, 8, 10, 18, 21, 22, 39, 43, 48, 49, 56, 58, 65, 68, 70, 74, 76, 80, 85, 90, 94, 125, 126, 128, 138, 141, $142  \}$  \\ 
			\rownumber & $144$ & $1760$ & $330$ & $27$ & $60$ & 
			$3^{2} {:}6$ & $48$ & $9^{5} 18^{1} 27^{1} 54^{1}$ &$\{4$, 8, 10, 18, 21, 22, 39, 43, 48, 49, 56, 58, 65, 68, 70, 74, 76, 80, 85, 90, 94, 125, 126, 128, 138, 141, $142\} $  \\
			\rownumber & $144$ & $1760$ & $330$ & $27$ & $60$ & 
			$3^{2} {:}6$ & $49$ & $9^{5} 18^{1} 27^{1} 54^{1}$ & $\{3$, 6, 9, 14, 19, 23, 26, 29, 35, 40, 44, 48, 53, 57, 59, 74, 81, 83, 116, 119, 120, 122, 126, 132, 133, 135, $136\}$ \\
			\multirow{3}{*}{\rownumber} & \multirow{3}{*}{$144$} & \multirow{3}{*}{$1760$} & \multirow{3}{*}{$330$} & \multirow{3}{*}{$27$} & \multirow{3}{*}{$60$} & 
			\multirow{3}{*}{$3^{2} {:}6$} & \multirow{3}{*}{$50$} & \multirow{3}{*}{$9^{3} 18^{2} 27^{3}$} & $\{1$, 5, 12, 14, 20, 24, 41, 42, 44, 50, 51, 55, 61, 64, 72, 75, 77, 81, 87, 89, 93, 123, 127, 132, 133, 135, $137\}$ \\
			& & & & & & & & & $\{3$, 6, 11, 13, 15, 17, 40, 45, 46, 52, 53, 60, 67, 69, 71, 79, 83, 84, 91, 95, 96, 121, 122, 131, 134, 140, $144\}$ \\
			& & & & &  & & & &  $\{ 4$, 8, 10, 18, 21, 22, 39, 43, 48, 49, 56, 58, 65, 68, 70, 74, 76, 80, 85, 90, 94, 125, 126, 128, 138, 141, $142\}$ \\
			\multirow{2}{*}{\rownumber} & \multirow{2}{*}{$144$} & \multirow{2}{*}{$2376$} & \multirow{2}{*}{$660$} & \multirow{2}{*}{$40$} & \multirow{2}{*}{$180$} & 
			\multirow{2}{*}{$2{\times} (5{:}4)$} & \multirow{2}{*}{$56$} & \multirow{3}{*}{$4^{1} 20^{3} 40^{2}$} & $\{1$, 3, 4, 6, 15, 16, 21, 24, 29, 32, 33, 35, 37, 44, 45, 48, 61, 68, 69, 72, 76, 77, 81, 83, 88, 91, 92, 95, 109, 110, 111, 115, 121, 126, 131, 132, 135, 136, 141, $144 \}$ \\
			& & & & & & & & & $\{ 5$, 7, 8, 10, 14, 18, 20, 23, 25, 27, 28, 30, 41, 42, 43, 46, 62, 64, 65, 66, 73, 75, 80, 82, 87, 89, 90, 93, 112, 117, 119, 120, 122, 123, 125, 129, 133, 137, 138, $140 \}$ \\
			\multirow{3}{*}{\rownumber} & \multirow{3}{*}{$144$} & \multirow{3}{*}{$3520$} & \multirow{3}{*}{$660$} & \multirow{3}{*}{$27$} & \multirow{3}{*}{$120$} & 
			\multirow{3}{*}{$3^{2} {:}3$} & \multirow{3}{*}{$69$} & \multirow{3}{*}{$9^{7} 27^{3}$} & $\{1$, 5, 12, 14, 20, 24, 41, 42, 44, 50, 51, 55, 61, 64, 72, 75, 77, 81, 87, 89, 93, 123, 127, 132, 133, 135, $137\}$ \\
			& & & & & & & & & $\{3$, 6, 11, 13, 15, 17, 40, 45, 46, 52, 53, 60, 67, 69, 71, 79, 83, 84, 91, 95, 96, 121, 122, 131, 134, 140, $144\}$ \\
			& & & & & & & & & $\{4$, 8, 10, 18, 21, 22, 39, 43, 48, 49, 56, 58, 65, 68, 70, 74, 76, 80, 85, 90, 94, 125, 126, 128, 138, 141, $142\}$ \\
			\noalign{\smallskip}\hline\noalign{\smallskip}
		\end{tabular}	
	\end{table}	
	
	\addtolength{\tabcolsep}{2pt} 
	\begin{proposition}\label{prop:m22}
		Let $\Dmc$ be a non-trivial $2$-$(v,k,\lambda)$ design admitting flag-transitive automorphism group $G$ with socle $G_{0}=\M_{22}$. Then $G$ is point-primitive and $(\Dmc,G)$ is as in one of the lines $29$-$113$ of {\rm Table \ref{tbl:des}}. 
	\end{proposition}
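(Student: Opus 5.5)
We follow the method of Section~\ref{sec:method}, as in the proofs of Propositions~\ref{prop:m11} and~\ref{prop:m12}. Since $|\Out(\M_{22})|=2$, either $G=\M_{22}$ or $G=\M_{22}{:}2$, and we treat the two groups separately.

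\emph{Step 1: candidate parameters.} For each $G$ we compute all conjugacy classes of subgroups with GAP \cite{GAP4}, or read them off from \cite{a:Leemans2015-atlas}. We then enumerate tuples $[(v,b,r,k,\lambda),H,K]$ that satisfy the arithmetic conditions (i)--(v). Here $v=|G:H|$ and $b=|G:K|$, and $r=z/v$, $k=z/b$ for $z$ the index of a common subgroup $L$. Since $|G|<|H|^3$, only subgroups $H$ of large order can occur. In practice these are the maximal subgroups $\PSL_3(4)$, $2^4{:}\A_6$, $\A_7$ (two classes), $2^4{:}\S_5$, $2^3{:}\PSL_3(2)$, $\M_{10}$, $\PSL_2(11)$ and their $\M_{22}{:}2$ analogues, together with a few large non-maximal subgroups of these. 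The symmetric case is taken directly from \cite{a:AD-spor-ft,a:Zhou-sym-sporadic}; it should give the single symmetric design on $176$ points.

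\emph{Step 2: exclude point-imprimitivity.} Whenever $H$ is not maximal in $G$, $G$ preserves a nontrivial partition into $d$ blocks of size $c$, with $cd=v$ and $c=|M:H|$ for an overgroup $M$ of $H$. For each such tuple we apply \cite[Theorem~1 and Lemma~5]{a:DP-2021-Imp}. In particular we use the bounds $k\leq 2\lambda^2(\lambda-1)$ and $v\leq(2\lambda^2(\lambda-1)-2)^2$, and search for an integer $\ell$ with $x=k-1-d(\ell-1)>0$ satisfying the listed divisibility conditions. We expect almost every case to die here. Any survivor is handled as in Proposition~\ref{prop:m12}. We list the candidate subgroups $K$, compute the $K$-orbits on the $v$ cosets of $H$, and discard $K$ if it has no orbit of length $k$. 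Otherwise we test each orbit $B$ with $|B^G|=b$ using the package \verb|Design| \cite{DESIGN}, and show that none yields a $2$-design. This establishes point-primitivity.

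\emph{Step 3: the primitive case.} For maximal $H$ we compute the subdegrees of $G$ on the cosets of $H$ and impose (vi), namely $r\mid\lambda e$ for every nontrivial subdegree $e$. This should cut the list sharply. For each surviving tuple we run over the conjugacy classes of subgroups $K$ of the right order. For each $K$-orbit $B$ of length $k$ with $|B^G|=b$, we check with \verb|Design| whether $B^G$ is the block set of a $2$-$(v,k,\lambda)$ design. We then sort the resulting designs up to isomorphism, for example by comparing invariants or testing isomorphism in GAP, and match them with lines $29$--$113$ of Table~\ref{tbl:des}.

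\emph{Main obstacle.} The hardest part is Step~3, because of its size. Many valid parameter sets arise with small $L$, so $b$ is large and $K$ is small. Such $K$ have many conjugacy classes and many orbits of length $k$, which produces a large number of candidate base blocks. Each candidate needs a design check and then an isomorphism-class reduction, since different $K$ or different orbits can give isomorphic designs. To control this, we first use the normaliser $N_G(K)$ to identify orbits that are equivalent under it. We also prefer larger $K$ first, because designs from a smaller $K$ often coincide with designs already found with a larger overgroup $K$. Only then do we perform full isomorphism testing.
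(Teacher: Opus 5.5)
Your proposal is correct and follows essentially the paper's own argument: split into $G=\M_{22}$ and $G=\M_{22}{:}2$, quote \cite{a:AD-spor-ft,a:Zhou-sym-sporadic} for the symmetric case, eliminate non-maximal point-stabilisers via \cite[Lemma~5]{a:DP-2021-Imp}, and in the primitive case apply the subdegree test, the search for $K$-orbits of length $k$, the check $|B^G|=b$, and the \texttt{Design} package for existence and isomorphism. In the paper the non-maximal cases arise only for $\M_{22}{:}2$ ($2^4{:}\A_6$ and $\A_7$, with $v=154,352$). One adjustment: your heuristic of processing larger $K$ first to catch repeats does no work, because $|B^G|=b$ forces $G_B=K$ exactly, so designs from $K$ and from a proper overgroup have different $b$ and are never isomorphic; the real reductions are via $N_G(K)$ and isomorphism tests among designs with equal parameters.
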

	
	\begin{proof}
		The symmetric case has already been studied in \cite{a:AD-spor-ft,a:Zhou-sym-sporadic}, and in conclusion we obtain the design in line $57$ of Table \ref{tbl:des} which is both point-primitive and block-primitive. Suppose now that $\Dmc$ is non-symmetric.
		Note that $|\Out(G_0)|=2$, and therefore $G=G_0=\M_{22}$ or $G=\M_{22}:2$. We study the two cases separately in the following. 
		
		Let $G=\M_{22}$. The list of all the subgroups (up to conjugation) of $\M_{22}$  can be found by GAP \cite{GAP4}. By  the method described in Section \ref{sec:method}, we obtain $200$ possibilities for $(v, b, r, k, \lambda)$.
		By considering the candidates for point-stabiliser $H$, we obtain $234$ possibilities for $[(v, b, r, k, \lambda), H]$, where $H$ is one of the following subgroups
		$2^3{:}\PSL_{3}(2)$, $2^4{:}\A_{6}$, $2^4{:}\S_{5}$, $\M_{10}=\A_{6}.2$, $\A_{7}$ (two non-conjugate classes) and $\PSL_{3}(4)$.
		We observe that in each case, $H$ is a maximal subgroup of $G$, that is to say, $G$ is point-primitive. Now by computing all non-trivial subdegrees of $G$, we conclude that all of these possibilities satisfy the property (vi) in Section \ref{sec:method}. Then by considering the candidates for block-stabiliser $K$, we have $827$ possibilities for the tuples $[(v, b, r, k, \lambda), H, K]$.
		For all such possibilities, we obtain all $K$-orbits, and then we observe that $371$ cases fail to have a $K$-orbit of length $k$. Now we obtain  $1247$ possibilities
		of the form $[[(v, b, r, k, \lambda), H, K], B]$, where $B$ is a $K$-orbit of length $k$. Furthermore, for each case, as $G$ acts transitively on
		the block set $\Bmc$, we must have $b=|B^G|=|\mathcal{B}|$. However, $865$ tuples fail to satisfy this condition. For the remaining $382$ possibilities, we use the GAP package ``\verb*|Design|'' \cite{DESIGN} and observe that $306$ possibilities lead to no design. Therefore, we have $76$ possible tuples $[[(v, b, r, k, \lambda), H, K], B]$, each of which leads to a $2$-design by taking $\Bmc=B^{G}$ with noting that some of these designs may be isomorphic. We first observe that there are $21$ designs with unique parameter sets, and so we have $21$ non-isomorphic designs. We then check possible isomorphism between $55$ remaining designs by  GAP package ``\verb*|Design|'' \cite{DESIGN}, and in conclusion, we find $28$ non-isomorphic designs. Therefore, if $G=\M_{22}$, then we obtain $49$ non-symmetric $2$-designs (up to isomorphism) listed as in lines $29$-$78$ of Table \ref{tbl:des}.
		
		Let  now $G=\M_{22}{:}2$. By applying the method in Section \ref{sec:method} and using the subgroups of $G$ (up to conjugation),
		and by considering candidates for point-stabiliser  $H$ we have $255$ possibilities for $[(v, b, r, k, \lambda), H]$, which $H$ is one of the subgroups $2^{5}{:}\S_{5}$, $\A_{6}{\cdot}2^{2}$, $2^4{:}\A_{6}$, $2^4{:}\S_{6}$, $2{\times}(2^3{:}\PSL_{3}(2))$, $\A_{7}$ and $\PSL_{3}(4){:}2$. 
		There are $18$ cases with $H$ non-maximal which are listed in Table \ref{tbl:m22d2-imp}, and the remaining $237$ cases, $H$ is maximal in $G$. We first consider $18$ point-imprimitive cases with a nontrivial point-partition $\Cmc$ into $d$ parts of size $c$ and $|B\cap \Delta|=\ell$ for some $B\in \Bmc$ and $\Delta \in \Cmc$. But none of these parameter sets leads to parameters $(c,d,\ell)$ satisfying the inequalities and divisibility conditions in \cite[Lemma 5]{a:DP-2021-Imp}. Therefore, $G$ must be point-primitive, and we need to discuss $237$ remaining possibilities.  All of these possibilities satisfy the property (vi) in Section \ref{sec:method}. Now by considering candidates for block-stabiliser $K$, we obtain  $1579$ possibilities for $[(v, b, r, k, \lambda), H, K]$. For each case, we compute the $K$-orbits, and then we exclude $809$ tuples that have no $K$-orbit of length $k$. By appending the $K$-orbit $B$ of length $k$, we obtain  $1218$ tuples of the form $[[(v, b, r, k, \lambda), H, K], B]$. We then observe that $1046$  possibilities fail to satisfy the condition $|B^G|=b$. In conclusion, by using GAP package ``\verb*|Design|'' \cite{DESIGN}, we obtain $35$ non-symmetric designs up to isomorphism recorded as in one of the lines $79$-$113$ of Table \ref{tbl:des}. 
	\end{proof}

	\setcounter{rownumber}{0}
	\begin{table} 
		\caption{The possible parameters for $G=\M_{22}{:}2$ as point-imprimitive automorphism group of $\Dmc$} \label{tbl:m22d2-imp}
		\centering
		\scriptsize
		\begin{tabular}{llllllll}
			\noalign{\smallskip}\hline\noalign{\smallskip} 
			Line & $v$ & $b$ & $r$ & $k$ & $\lambda$ & $H$ & nr(H) \\
			\noalign{\smallskip}\hline\noalign{\smallskip} 
			
			\rownumber & $154$ & $231$ & $180$ & $120$ & $140$ & $2^4{:}\A_{6}$ & $6$\\
			\rownumber & $154$ & $462$ & $360$ & $120$ & $280$ & $2^4{:}\A_{6}$ & $6$\\
			\rownumber & $154$ & $616$ & $72$ & $18$ & $8$ & $2^4{:}\A_{6}$ & $6$\\
			\rownumber & $154$ & $770$ & $90$ & $18$ & $10$ & $2^4{:}\A_{6}$ & $6$\\
			\rownumber & $154$ & $924$ & $720$ & $120$ & $560$ & $2^4{:}\A_{6}$ & $6$\\
			\rownumber & $154$ & $1232$ & $144$ & $18$ & $16$ & $2^4{:}\A_{6}$ & $6$\\
			\rownumber & $154$ & $1540$ & $180$ & $18$ & $20$ & $2^4{:}\A_{6}$ & $6$\\
			\rownumber & $154$ & $2464$ & $288$ & $18$ & $32$ & $2^4{:}\A_{6}$ & $6$\\
			\rownumber & $154$ & $3080$ & $360$ & $18$ & $40$ & $2^4{:}\A_{6}$ & $6$\\
			\rownumber & $154$ & $6160$ & $720$ & $18$ & $80$ & $2^4{:}\A_{6}$ & $6$\\
			\rownumber & $154$ & $7392$ & $5760$ & $120$ & $4480$ & $2^4{:}\A_{6}$ & $6$\\
			\rownumber & $154$ & $12320$ & $1440$ & $18$ & $160$ & $2^4{:}\A_{6}$ & $6$\\
			\rownumber & $154$ & $24640$ & $2880$ & $18$ & $320$ & $2^4{:}\A_{6}$ & $6$\\
			\rownumber & $154$ & $49280$ & $5760$ & $18$ & $640$ & $2^4{:}\A_{6}$ & $6$\\
			\rownumber & $352$ & $2772$ & $315$ & $40$ & $35$ & $\A_{7}$ & $9$\\
			\rownumber & $352$ & $5544$ & $630$ & $40$ & $70$ & $\A_{7}$ & $9$\\
			\rownumber & $352$ & $11088$ & $1260$ & $40$ & $140$ & $\A_{7}$ & $9$\\
			\rownumber & $352$ & $22176$ & $2520$ & $40$ & $280$ & $\A_{7}$ & $9$\\
			\noalign{\smallskip}\hline\noalign{\smallskip} 
		\end{tabular}
	\end{table}

	\begin{proposition}\label{prop:m23}
		Let $\Dmc$ be a non-trivial $2$-$(v,k,\lambda)$ design with flag-transitive automorphism group $G$ with socle $G_{0}=\M_{23}$. Then $G$ is point-primitive and $(\Dmc , G)$ is as in one of the lines $114$-$142$ of {\rm Table \ref{tbl:des}}. 
	\end{proposition}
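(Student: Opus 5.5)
We follow the computational method of Section~\ref{sec:method}, as in Propositions~\ref{prop:m11}--\ref{prop:m22}. Since $|\Out(\M_{23})|=1$, we have $G=G_{0}=\M_{23}$. The symmetric case has been treated in \cite{a:AD-spor-ft,a:Zhou-sym-sporadic}, and no flag-transitive symmetric design with socle $\M_{23}$ arises there. So we assume that $\Dmc$ is non-symmetric.

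First we list all conjugacy classes of subgroups of $\M_{23}$ with GAP \cite{GAP4}, or read them off from \cite{a:Leemans2015-atlas}. From the indices of these subgroups we form the arithmetic candidates $(v,b,r,k,\lambda)$ satisfying (i)--(v). Condition (v), namely $|G|<|H|^{3}$ and $r\mid |H|$, restricts the point-stabiliser $H$ to large subgroups. We expect these to be the maximal subgroups $\M_{22}$, $\PSL_{3}(4){:}2$, $2^{4}{:}\A_{7}$, $\A_{8}$, $\M_{11}$, $2^{4}{:}(3{\times}\A_{5}){:}2$, and possibly a few non-maximal subgroups such as $\PSL_{3}(4)$, $2^{4}{:}\A_{6}$ or $\A_{7}$.

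For each tuple with $H$ not maximal, $G$ acts imprimitively on points. We determine the possible block systems, with $c$ and $d$ given by the overgroups of $H$, and test the parameters against \cite[Theorem~1 and Lemma~5]{a:DP-2021-Imp}. In particular $v\le (2\lambda^{2}(\lambda-1)-2)^{2}$, and the divisibility conditions on $c,d,\ell$ and $x=k-1-d(\ell-1)$ must hold. We expect every such tuple to fail, exactly as happened for $\M_{11}$ and $\M_{22}{:}2$. That would give that $G$ is point-primitive. If some imprimitive case survives, we fall back on the orbit computation below, using the action on the cosets of $H$.

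For the primitive tuples we proceed in four steps.
\begin{enumerate}[\rm (1)]
\item We compute the subdegrees of $G$ on the cosets of $H$ and discard the tuples violating (vi).
\item For each survivor we run through the subgroups $K$ of index $b$ with $k\mid |K|$. We realise $K$ inside the degree-$v$ permutation representation, compute its orbits, and keep the orbits $B$ of length $k$, as (vii) requires.
\item We keep only those orbits with $|B^{G}|=b$, which means $K=G_{B}$.
\item We test with the ``\verb|Design|'' package \cite{DESIGN} whether $B^{G}$ is a $2$-design.
\end{enumerate}
Finally we sort the surviving designs into isomorphism classes. Designs with distinct parameter sets are automatically non-isomorphic, and the rest are compared in GAP. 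The resulting list should be lines $114$--$142$ of Table~\ref{tbl:des}.

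The main obstacle is computational size. $\M_{23}$ has many subgroup classes, and degrees such as $v=1771$, $40320$ or $129536$ make the orbit and design checks expensive. We would first apply all the cheap arithmetic filters, namely (iii), (v) and (vi), so that heavy computations are run only on a small residual list. For large $v$ we would check the $2$-design property through $K$-orbit counting on pairs, rather than by building the full incidence structure.
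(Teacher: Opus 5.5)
Your proposal is correct and follows the paper's proof essentially step for step. You reduce to $G=\M_{23}$ and exclude the symmetric case by \cite{a:AD-spor-ft,a:Zhou-sym-sporadic}. You then eliminate the non-maximal point-stabiliser cases via \cite[Lemma 5]{a:DP-2021-Imp}; these turn out to be $H=2^4{:}\A_6$ with $v=1771$ and $H=2^4{:}\S_5$ with $v=5313$. The primitive cases are filtered by subdegrees, $K$-orbits of length $k$, the condition $|B^G|=b$ and the \texttt{Design} package. This is exactly the paper's pipeline, ending in $29$ designs up to isomorphism.

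Your speculative list of point-stabilisers is slightly off: $\A_8$ is already excluded by conditions (i)--(v), and $129536$ does not even divide $|\M_{23}|$. These are only stated expectations, so they do not affect the argument.
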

	
	\begin{proof}
		Since $\Out(G_0)=1$, we have $G=\M_{23}$. By \cite{a:AD-spor-ft,a:Zhou-sym-sporadic}, we have no symmetric design with flag-transitive automorphism group $G=\M_{23}$. Let now $\Dmc$ be a non-trivial non-symmetric $2$-$(v, k, \lambda)$ design. Then by applying the method described in Section \ref{sec:method}, we obtain $518$ parameter sets $(v, b, r, k, \lambda)$.
		By considering candidates for point-stabiliser $H$, we have $838$ possibilities for $[(v, b, r, k, \lambda), H]$, where $H$ is one of the subgroups $2^4{:}(3{\times}\A_{5}){:}2$, $2^4{:}\A_{6}$, $2^4{:}\A_{7}$, $2^4{:}\S_{5}$ (two non-conjugate classes), $\M_{11}$, $\M_{22}$ and $\PSL_{3}(4){:}2$.
		We observe that among these possibilities, there are $12$ cases with non-maximal subgroup $H$, which are listed in Table \ref{tbl:m23-imp}, and $826$ cases with maximal point-stabiliser $H$ in $G$.

		We first consider $12$ point-imprimitive cases with a nontrivial point-partition $\Cmc$ into $d$ parts of size $c$ and $|B\cap \Delta|=\ell$ for some $B\in \Bmc$ and $\Delta \in \Cmc$. But none of these parameter sets leads to parameters $(c,d,\ell)$ satisfying the inequalities and divisibility conditions in \cite[Lemma 5]{a:DP-2021-Imp}. Therefore, $G$ is point-primitive.

		Now we consider the remaining $826$ cases with maximal subgroup $H$ in $G$. Among these, only $236$ possibilities satisfy the subdegree property (vi) in Section \ref{sec:method}, and then by considering candidates for block-stabiliser $K$, we obtain $688$ possibilities for $[(v, b, r, k, \lambda), H, K]$. However, $431$ cases admit no $K$-orbit of length $k$. Thus we have $438$ possibilities for $[[(v, b, r, k, \lambda), H, K], B]$ with $K$-orbit $B$ of length $k$. Furthermore, $291$ of these possibilities do not satisfy the condition $|B^G|=b$, leaving $147$ possibilities for further consideration. In conclusion, using GAP package ``\verb*|Design|'' \cite{DESIGN}, we find exactly $29$ designs up to isomorphism, which are listed as in one of the lines $114$-$142$ of {\rm Table \ref{tbl:des}}.
	\end{proof}

	\setcounter{rownumber}{0}
	\begin{table}
		\caption{The possible parameters for $G=\M_{23}$ as point-imprimitive automorphism group of $\Dmc$} \label{tbl:m23-imp}
		\centering
		\scriptsize
		\begin{tabular}{llllllll}
			\noalign{\smallskip}\hline\noalign{\smallskip} 
			Line & $v$ & $b$ & $r$ & $k$ & $\lambda$ & $H$ & nr(H) \\
			\noalign{\smallskip}\hline\noalign{\smallskip} 
			
			\rownumber & $1771$ & $3542$ & $120$ & $60$ & $4$ & $2^4{:}\A_{6}$ & $9$\\
			\rownumber & $1771$ & $5313$ & $180$ & $60$ & $6$ & $2^4{:}\A_{6}$ & $9$\\
			\rownumber & $1771$ & $10626$ & $360$ & $60$ & $12$ & $2^4{:}\A_{6}$ & $9$\\
			\rownumber & $1771$ & $14168$ & $480$ & $60$ & $16$ & $2^4{:}\A_{6}$ & $9$\\
			\rownumber & $1771$ & $21252$ & $720$ & $60$ & $24$ & $2^4{:}\A_{6}$ & $9$\\
			\rownumber & $1771$ & $28336$ & $960$ & $60$ & $32$ & $2^4{:}\A_{6}$ & $9$\\
			\rownumber & $1771$ & $42504$ & $1440$ & $60$ & $48$ & $2^4{:}\A_{6}$ & $9$\\
			\rownumber & $1771$ & $56672$ & $1920$ & $60$ & $64$ & $2^4{:}\A_{6}$ & $9$\\
			\rownumber & $1771$ & $85008$ & $2880$ & $60$ & $96$ & $2^4{:}\A_{6}$ & $9$\\
			\rownumber & $1771$ & $170016$ & $5760$ & $60$ & $192$ & $2^4{:}\A_{6}$ & $9$\\
			\rownumber & $5313$ & $60720$ & $960$ & $84$ & $15$ & $2^4{:}\S_{5}$ & $15$\\
			\rownumber & $5313$ & $60720$ & $960$ & $84$ & $15$ & $2^4{:}\S_{5}$ & $16$\\
			\noalign{\smallskip}\hline\noalign{\smallskip} 
		\end{tabular}
	\end{table}

	\begin{proposition}\label{prop:m24}
		Let $\Dmc$ be a non-trivial $2$-$(v,k,\lambda)$ design with a  flag-transitive automorphism group $G$ with socle $G_{0}=\M_{24}$. Then $G$ is point-primitive and $(\Dmc , G)$ is as in one of the lines $143$-$184$ of {\rm Table \ref{tbl:des}}.
	\end{proposition}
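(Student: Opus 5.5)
We follow the same computational scheme used for Propositions~\ref{prop:m22} and \ref{prop:m23}. Since $|\Out(\M_{24})|=1$, we have $G=\M_{24}$. The symmetric case is already settled: by \cite{a:AD-spor-ft,a:Zhou-sym-sporadic} there is no flag-transitive symmetric design with $G=\M_{24}$, so we assume $\Dmc$ is non-symmetric.

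We compute the conjugacy classes of subgroups of $\M_{24}$ in GAP, or read them off from \cite{a:Leemans2015-atlas}. For each pair of indices we then generate all tuples $(v,b,r,k,\lambda)$ satisfying facts (i)--(v) of Section~\ref{sec:method}. Here $v=|G:H|$, $b=|G:K|$, and $r=z/v$, $k=z/b$ for $z$ the index of a candidate flag stabiliser $L$. The condition $|G|<|H|^3$ already restricts $H$ to fairly large subgroups. We expect these to be the maximal subgroups $\M_{23}$, $\M_{22}{:}2$, $2^4{:}\A_8$, $\M_{12}{:}2$, $2^6{:}3\cdot\S_6$, $\PSL_3(4){:}\S_3$, $2^6{:}(\PSL_3(2)\times\S_3)$ and $\PSL_2(23)$, together with a few non-maximal subgroups such as $\M_{22}$, $\M_{21}{:}2$, $2^4{:}\A_7$ and $\M_{12}$.

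The first substantive step is to eliminate the non-maximal $H$, which would give point-imprimitive actions. For each such tuple, we take the possible block systems coming from overgroups of $H$, giving $d$ blocks of size $c$. We then apply \cite[Theorem 1 and Lemma 5]{a:DP-2021-Imp}, in particular $k\le 2\lambda^2(\lambda-1)$, $v\le(2\lambda^2(\lambda-1)-2)^2$ and the divisibility conditions on $(c,d,\ell,x)$. We expect every case to be excluded, as happened for $\M_{22}{:}2$ and $\M_{23}$. If some case survives, we fall back on computing the $K$-orbits of length $k$ and testing with \texttt{Design}, exactly as in Proposition~\ref{prop:m12}. This gives point-primitivity.

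For maximal $H$, we build the permutation representation of $G$ on the cosets of $H$ using \texttt{AtlasRep}, compute the subdegrees, and discard tuples violating $r\mid\lambda e$ (fact (vi)). For each surviving tuple we run over the conjugacy classes of subgroups $K$ of index $b$. We keep only those $K$ having an orbit $B$ of length $k$, and among these only those with $|B^G|=b$. For each such $B$ we test with \texttt{Design} whether $B^G$ is a $2$-design with the given $\lambda$; flag-transitivity is automatic once $K$ is transitive on $B$ and $G_B=K$. Finally, we sort the resulting designs by parameters and test isomorphism within each parameter class, obtaining the list in lines $143$--$184$.

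The main obstacle is computational. Because $|\M_{24}|$ is large, the number of subgroup classes $K$ is large, and some actions are large (for example $v=1771$ on sextets or $v=3795$ on trios). Enumerating $K$-orbits and verifying the $2$-design property over large block sets will be expensive. To control this, we apply the subdegree filter (vi) and the orbit-length filter before any design test. We also choose $K$ only among subgroups containing a conjugate of a candidate $L\le H$ of index $r$, which cuts down the search considerably.
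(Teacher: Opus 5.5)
Your proposal is correct and follows essentially the same computational route as the paper: the arithmetic filter (i)--(v), the subdegree test (vi), then $K$-orbits of length $k$, the check $|B^G|=b$, and finally \texttt{Design} with isomorphism testing. The one difference is that in the paper conditions (i)--(v) already leave only the maximal subgroups $2^{6}{:}3{\cdot}\S_{6}$, $\M_{12}{:}2$, $\M_{22}{:}2$ and $\M_{23}$ as point-stabilisers, so point-primitivity is immediate and your imprimitive step via \cite{a:DP-2021-Imp} never arises (nor do the other candidates you anticipated: $2^4{:}\A_8$, $\PSL_3(4){:}\S_3$, the trio stabiliser, $\PSL_2(23)$ and the non-maximal subgroups).
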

	
	\begin{proof}
		Here $G=\M_{24}$ since  $|\Out(G_0)|=1$. It follows from  \cite{a:AD-spor-ft,a:Zhou-sym-sporadic} that $\Dmc$ must be non-symmetric. 
		Applying the method described in Section \ref{sec:method}, leads us to $497$ possibilities for $[(v, b, r, k, \lambda), H]$, where $H$ is one of the maximal subgroups $2^{6} {:} 3 {\cdot} \S_{6}$, $\M_{12} {:} 2$, $\M_{22} {:} 2$ and $\M_{23}$. Therefore, $G$ is point-primitive. The subdegree property (vi) rules out $147$ cases leaving $350$ cases for further consideration. Now by inspecting the subgroups $K$ as admissible block-stabilisers, we obtain $1760$ possibilities for $[(v, b, r, k, \lambda), H, K]$. Among these, for $1242$ cases, we find no $K$-orbit of length $k$, and hence these cases cannot occur. Therefore, we have $1319$
		possibilities for $[[(v, b, r, k, \lambda), H, K], B]$ with $K$-orbit $B$ of length $k$. There are $845$ cases not satisfying the condition $|B^G|=b$, and this leaves $474$ possibilities. Moreover by GAP package ``\verb*|Design|'' \cite{DESIGN}, we obtain $42$ designs up to isomorphism, and these $2$-designs  are listed as in one of the lines $143$-$184$ of Table \ref{tbl:des}.	
	\end{proof}
	
	\noindent \textbf{Proof of Theorem \ref{thm:main}.} 
	Let $\Dmc$ be a nontrivial $2$-$(v,k,\lambda)$ design admitting a flag-transitive automorphism group $G$ with socle $G_{0}$ one of the Mathieu groups. If $G_{0}=\M_{11}$ or $\M_{12}$, then by 
	\cite{a:Tian2020-M11,a:Tian2025-M12}, we only need to consider the point-imprimitive automorphism groups $G$, and this case is treated in Propositions~\ref{prop:m11}-\ref{prop:m12}, and we conclude that $G=\M_{12}$ and $\Dmc$ is the unique symmetric design with parameter set $(144, 66, 30)$ as in \cite{a:Lempken-M12-imp}, see also \cite[Theorem 1.1(a)]{a:AD-spor-ft}. In the remaining cases where $G_{0}=\M_{22}$, $\M_{23}$ or $\M_{24}$, the result follows immediately from Propositions~\ref{prop:m22}-\ref{prop:m24}.

	\section{Examples: construction of designs}\label{sec:examples}
	
	In this section, we give brief information on the construction of the  $2$-designs obtained in Theorem \ref{thm:main} (and Corollary~\ref{cor:main}). We present some information on the symmetries of $2$-designs as well as required information for their construction. Indeed, each line of Table~\ref{tbl:des} represents a $2$-design with the parameters in the line. We now fix a design $\Dmc$ in a line.  
	The second column is the number associated with $\Dmc$ obtained from the automorphism  group $G$ in column $8$. The parameters of $\Dmc$ are given in columns $3$-$7$, and the group $G$ is written in the $8$th column. The structures of point-stabiliser $H$ and block-stabiliser $K$ are given in columns $9$ and $10$, respectively. The numbers $\nr(H)$ and $\nr(K)$ associated with these subgroups are listed in the next two columns, namely, $11$ and $12$. These numbers in particular indicate whether the designs are constructed on the same point set or not. In column $13$, we write the full automorphism group $\Aut(\Dmc)$ of the design, and we indicate in the next two columns $14$ and $15$ if $G$ is point-primitive ``p-prim'' and/or block-primitive ``b-prim''. 
	If there is a design with number $i$ which is the complement of $\Dmc$,  then we write $i$ in column $16$ with the header ``comp'', otherwise, we leave it blank. Any comment regarding $\Dmc$ is recorded in column $17$. In the last column, we present some references in which a construction of the design $\Dmc$ is given. 
	
	Suppose now that $\Dmc_i(G)$ is the design with number $i$ in the second column and $G$ in the $8$th column. In order to construct the design $\Dmc_i(G)$, we list a base block $\Bb{G}{i}$ of $\Dmc_{i}(G)$ in Table~\ref{tbl:base}. Then, as explained in Section \ref{sec:method}, the point set of $\Dmc_i(G)$ is $\{1,\ldots,v\}$ and the block set of $\Dmc_i(G)$ is the $G$-orbit $\Bb{G}{i}^{G}$. 
	For the design $\Dmc_{10}(\M_{12})$ which is point-imprimitive, an explicit construction is given in \cite{a:AD-spor-ft}. The generators of $G$, $H$ and $K$ are given in \cite[Table~2]{a:AD-spor-ft}, and we take  $\Bb{\M_{12}}{10}= \{1,$ 3, 4, 5, 8, 11, 13, 14, 16, 19, 20, 22, 26, 27, 28, 32, 33, 36, 37, 38, 41, 42, 44, 48, 49, 50, 51, 58, 59, 60, 64, 66, 68, 70, 71, 72, 73, 74, 75, 82, 83, 84, 98, 102, 103, 105, 106, 108, 109, 110, 112, 115, 116, 118, 121, 123, 126, 128, 129, 130, 135, 136, 139, 141, 142, $143\}$. Then the  block set of this design is $\Bb{\M_{12}}{10}^{G}$. For the  remaining designs, as all $2$-designs are point-primitive, one can use the GAP library for primitive permutation groups and take $G$ as a primitive group on $v$ points, and then follow the  above argument to construct the design. Note in passing that in our original arguments in Propositions~\ref{prop:m11}-\ref{prop:m24}, we take $\Pmc$ as the  set of right cosets of $H$ in $G$, and this  leads to different generators for $G$, $H$ and $K$ in comparison with the ones stored in GAP's libraries. But for convenience, in Table~7, we present (a new) base block by using available data in GAP's library \cite{GAP4}. We remark here that for $G=\M_{12}:2$ we have two primitive permutation groups on $144$ points stored in GAP, the designs in lines $27$ and $28$ are obtained by taking $G$ as the $5$th group and the $4$th group in the library, respectively. Moreover, $G=\M_{23}$, as the $5$th primitive group in the GAP library gives rise to the designs in lines $138$-$142$. 
	
	\section*{Declaration of competing interest}
	
	The authors declare that they have no known competing financial interests or personal relationships that could have appeared to influence the work reported in this paper.
	
	\section*{Data availability statement}
	
	The authors declare that the data supporting the findings of this study are available within the paper and its references.

	\setcounter{rownumber}{0}
	\setcounter{linenumber}{0}
	\tiny
	\addtolength{\tabcolsep}{-4pt} 

	\normalsize
	

\end{document}